\documentclass[11pt]{amsart}

\usepackage{amsfonts}
\usepackage{amssymb}
\usepackage{setspace}
\usepackage{comment}

\usepackage{geometry}
 \geometry{
 a4paper,
 total={150mm,237mm},
 left=30mm,
 top=30mm,
 }

\usepackage[utf8]{inputenc}
\usepackage[english]{babel}
\usepackage{enumitem}
 
\usepackage{amsthm}
\usepackage{amsmath}
\usepackage{amssymb}

\newtheorem{theorem}{Theorem}

\newtheorem{lemma}{Lemma}

\theoremstyle{remark}

\newtheorem*{rmk}{Remark}

\renewcommand{\leq}{\leqslant}
\renewcommand{\geq}{\geqslant}

\title{A note on exponential-M\"{o}bius sums over $\mathbb{F}_q[t]$ }

\author[S. Porritt]{Sam Porritt}
\address{Department of Mathematics\\University College London\\
25 Gordon Street, London, England}
\email{samuel.porritt.15@ucl.ac.uk}

\begin{document}


\maketitle

\begin{abstract}
In 1991, Baker and Harman proved, under the assumption of the generalized Riemann hypothesis, that $\max_{ \theta \in [0,1) }\left|\sum_{ n \leq x } \mu(n) e(n \theta) \right| \ll_\epsilon x^{3/4 + \epsilon}$. The purpose of this note is to deduce an analogous bound in the context of polynomials over a finite field using Weil's Riemann Hypothesis for curves over a finite field. Our approach is based on the work of Hayes who studied exponential sums over irreducible polynomials.
\end{abstract}

\section{Introduction}

Let $\mu$ be the M\"{o}bius function and write $e(\theta) = e^{2\pi i \theta}$. Baker and Harman~\cite{BaHar} proved under the assumption of the generalized Riemann hypothesis that for all $\epsilon > 0$,
\begin{equation}
\max_{\theta \in [0,1)} \left| \sum_{n \leq x} \mu(n) e(n\theta) \right| \ll_\epsilon x^{\frac{3}{4}+\epsilon}.
\end{equation}
It is conjecture that (1) holds for all $\epsilon>0$ with $\frac{3}{4}$ replaced by $\frac{1}{2}$. The best unconditional result is due to Davenport~\cite{Dav} who showed that for all $A > 0$
$$
\max_{\theta \in [0,1)} \left| \sum_{n \leq x} \mu(n) e(n\theta) \right| \ll_A \frac{x}{(\log x)^A}.
$$

The purpose of this note is to deduce an analogue of (1) for the polynomial ring $\mathbb{F}_q[t]$. First, let us go through some definitions required to state the result. The function field analogue of the real numbers is the completion of the field of fractions of $\mathbb{F}_q[t]$ with respect to the norm defined by
$$|f/g| = \begin{cases} q^{\deg f - \deg g} &\text{ if } f\neq 0 \\0 & \text{ otherwise}. \end{cases}$$
This completion is naturally identified with the ring of formal Laurent series $\mathbb{F}_q((1/t))=\{ \sum_{i\leq j} x_i t^i \: : \: x_i \in \mathbb{F}_q, \: j \in \mathbb{Z} \}.$ The norm defined above is extended to $x = \sum_{ i \leq j } x_i t^i \in \mathbb{F}_q((1/t))$ by setting $|x| = q^j$ where $j$ is the largest index with $x_j \neq 0$. The analogue of the unit interval is $\mathbb{T}:=\{ \sum_{i < 0} x_i t^i \: : \: x_i \in \mathbb{F}_q \},$ and is a subring of $\mathbb{F}_q((1/t))$.

Define the additive character $\psi : \mathbb{F}_q \rightarrow \mathbb{C}^\times$ by $\psi(x) = e(\text{tr}(x)/p)$, where $\text{tr} : \mathbb{F}_q \rightarrow \mathbb{F}_p$ is the usual trace map and $p$ is the characteristic of $\mathbb{F}_q$. Define also the exponential map $\textbf{e}_q : \mathbb{F}_q((1/t))\rightarrow \mathbb{C}^\times$ by $\textbf{e}_q (x) = \psi(x_{-1}).$

Now let $\mu$ denote the M\"{o}bius function on the ring $\mathbb{F}_q[t]$. All sums over polynomials are sums over monic polynomials.

\begin{theorem}\label{thm}
Suppose $ n \geq 3$. Then
$$\max_{\theta \in \mathbb{T}} \left| \sum_{ \deg f = n }\mu(f) \textbf{e}_q(f \theta) \right| \leq 4 q^{\frac{3n+1}{4}}\left(\tfrac{3\sqrt{3}}{2}\right)^n.$$
\end{theorem}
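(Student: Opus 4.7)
\emph{Overall approach.} I would combine a Vaughan-type decomposition of the M\"obius function with Dirichlet approximation in $\mathbb{F}_q[t]$ and bounds on twisted M\"obius sums coming from Weil's Riemann Hypothesis for curves, following the outline of Baker--Harman but working in the framework Hayes developed for exponential sums over irreducibles. The first step is to fix a parameter $Q$ (eventually to be optimised near $n/2$) and invoke the function field Dirichlet approximation theorem: every $\theta \in \mathbb{T}$ admits a representation $\theta = a/g + \beta$ with $\gcd(a,g)=1$, $\deg g \leq Q$, and $|\beta|$ suitably small. This splits the analysis into two regimes according to the size of $\deg g$.

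\emph{Small denominator regime.} When $\deg g$ is small, I would detect $f \bmod g$ using Dirichlet characters mod $g$, rewriting the sum as a linear combination
$$\frac{1}{\varphi(g)}\sum_{\chi \bmod g}\tau_\chi \sum_{\deg f = n}\mu(f)\chi(f)\mathbf{e}_q(f\beta),$$
with Gauss sum weights $\tau_\chi$ of modulus at most $q^{\deg g/2}$. Weil's Riemann Hypothesis implies that $L(s,\chi)=\prod_j(1-\alpha_{j,\chi}q^{-s})$ is a polynomial of degree at most $\deg g - 1$ with $|\alpha_{j,\chi}|=\sqrt{q}$, which gives $\sum_{\deg f = n}\mu(f)\chi(f)\ll (\deg g)\,q^{n/2}$; handling the $\mathbf{e}_q(f\beta)$ twist amounts to an Abel summation using that $|\beta|$ is small. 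Summing over characters then yields a bound of size roughly $q^{n/2+\deg g/2}$ on this regime.

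\emph{Large denominator regime.} When $\deg g$ is large, $\theta$ has no good rational approximation of small degree, and I would apply Vaughan's identity in $\mathbb{F}_q[t]$, writing $\mu$ as a sum of a Type I piece $\sum_{\deg d \leq U}\mu(d)\sum_{\deg e = n-\deg d}\mathbf{e}_q(de\theta)$ and a Type II bilinear piece $\sum_{U<\deg d \leq V}\sum_{\deg e = n - \deg d}a_d b_e \mathbf{e}_q(de\theta)$ with bounded coefficients. The inner geometric sum in the Type I piece is controlled by $|d\theta|^{-1}$-type quantities via Hayes's explicit bound on $\sum_{\deg e = m}\mathbf{e}_q(e\alpha)$; the large $\deg g$ hypothesis ensures that not too many $d$ make $d\theta$ close to $0$. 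The Type II piece is treated by Cauchy--Schwarz in the $e$ variable, squaring out to $\sum_{e_1,e_2,d}\mathbf{e}_q((e_1-e_2)d\theta)$, which is again controlled by the Dirichlet approximation hypothesis.

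\emph{Optimisation and expected obstacle.} Choosing $Q \approx n/2$ and balancing the Vaughan parameters $U,V$ produces the exponent $\tfrac{3n+1}{4}$. The main obstacle will not be the strategy, which is essentially forced, but the bookkeeping needed to recover the prefactor $4\bigl(\tfrac{3\sqrt 3}{2}\bigr)^n$: the factor $3\sqrt 3/2 = \sqrt{27/4}$ is suggestive of an explicit bound on a divisor sum $\sum_{\deg d \leq U}\tau_k(d)$ at the optimal choice of $U$, so I expect the constant to emerge from estimating the number of representations $f = de$ appearing in Vaughan's identity via the polynomial analogue of $\tau_k(f) \ll \binom{\deg f + k - 1}{k-1}$, evaluated at the balancing point $U \approx n/3$.
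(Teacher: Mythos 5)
Your outline diverges from the paper's argument in a way that exposes two genuine gaps. First, in your ``small denominator regime'' you propose to detect $f \bmod g$ with ordinary Dirichlet characters and then remove the remaining twist $\mathbf{e}_q(f\beta)$ by Abel summation. That step does not survive transplantation to $\mathbb{F}_q[t]$: there is no useful ordering of the $q^n$ monic polynomials of degree $n$ along which to sum by parts, and for $|\beta| < q^{-(n/2+\deg g)}$ the phase $\mathbf{e}_q(f\beta)$ genuinely depends on roughly the top $n/2-\deg g$ coefficients of $f$, so it is far from constant on the sum. The paper's resolution is Hayes's relation $\mathcal{R}_{s,g}$, which combines the congruence mod $g$ with a short-interval condition on the leading coefficients; with $s = n-[\tfrac n2]-\deg g$ the map $f\mapsto \mathbf{e}_q(f\theta)$ is literally constant on $\mathcal{R}_{s,g}$-classes (Lemma~\ref{lem}), and one expands in characters of the group $\mathcal{R}^*_{s,g}$ of order $q^s\phi(g)$ rather than of $(\mathbb{F}_q[t]/g)^\times$. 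The corresponding $L$-functions have degree up to $s+\deg g-1\approx n/2$, not $\deg g-1$, and Weil's RH applied to these is what drives the whole bound. Relatedly, your attribution of the factor $(3\sqrt3/2)^n$ to divisor sums in Vaughan's identity is off target: in the paper it comes from the binomial coefficient $\binom{n+d(\chi)-1}{d(\chi)-1}$ counting monomials in the expansion of $1/L(u,\chi)=\prod_{i\le d(\chi)}(1-\alpha_i u)^{-1}$ with $d(\chi)\approx n/2$ inverse zeros, estimated by Stirling as $\binom{3k}{k}<(27/4)^k/\sqrt{4\pi k/3}$ with $k=[\tfrac n2]$.

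Second, the entire ``large denominator regime'' with Vaughan's identity is both unnecessary and unlikely to deliver the exponent. Dirichlet approximation in $\mathbb{F}_q[t]$ (Lemma~\ref{decomp}) gives \emph{every} $\theta\in\mathbb{T}$ a unique approximation $a/g$ with $\deg g\le n/2$ and error below $q^{-(n/2+\deg g)}$, so there is no minor-arc case: the Hayes-character expansion applies uniformly, and the final bound $q^{n/2}\cdot(q^s\phi(g/d))^{1/2}\approx q^{n/2+n/4}$ emerges from combining the RH bound with a Gauss-sum estimate $(q^s\phi(g/d))^{3/2}$ obtained by Cauchy--Schwarz and Parseval. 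A Type I/II decomposition without the Riemann Hypothesis input would give far weaker savings (compare Davenport's $x/(\log x)^A$ in the integer setting), so your proposed route through bilinear sums cannot recover $q^{3n/4}$ as sketched. The strategy you want is: no arc dichotomy, Hayes's congruence classes to linearise the exponential, and Weil's RH for the resulting family of $L$-functions of degree about $n/2$.
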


\begin{rmk}
It follows that for all $ \epsilon > 0$ and $q$ large enough with respect to $\epsilon$ we have
$$\max_{\theta \in \mathbb{T}} \left| \sum_{ \deg f = n }\mu(f) \textbf{e}_q(f \theta) \right| \leq 4 q^{(\frac{3}{4} + \epsilon )n}.$$
\end{rmk}

Our proof of Theorem~\ref{thm} will follow the strategy of Hayes employed in his study of the exponential sum
$$\sum_{ \substack{ \deg \omega = n \\ \omega \text{  irreducible  }  }} \textbf{e}_q(\omega \theta).$$
Recently, Bienvenu and L\^e have independently derived a similar result to Theorem~\ref{thm} in~\cite{PiHo}. Their Theorem 9 corresponds to our Lemma~\ref{mobchar} and their Theorem~11 closely resembles our Theorem~\ref{thm}.

\emph{Acknowledgements}. We are very grateful to Pierre Bienvenu for pointing out a mistake in an earlier version of our proof of Theorem~\ref{thm}. We are also grateful to Andrew Granville for pointing out how to strengthen an earlier version of Theorem~\ref{thm}. Previously, we required both $n$ and $q$ to be large with respect to $\epsilon$ for the remark that follows Theorem~\ref{thm} to hold. This work was supported by the Engineering and Physical Sciences Research Council EP/L015234/1 via the
EPSRC Centre for Doctoral Training in Geometry and Number Theory (The London School of Geometry and
Number Theory), University College London.

\section{Lemmas}

Let $\mathbb{F}_q[t]^\times$ be the multiplicative monoid of monic polynomials in $\mathbb{F}_q[t]$. Whilst investigating the distribution of irreducible polynomoials over $\mathbb{F}_q$, Hayes~\cite{Hay1} introduced certain congruences classes on $\mathbb{F}_q[t]^\times$ defined as follows. Let $s \geq 0$ be an integer and $g \in \mathbb{F}_q[t]$. We define an equivalence relation $\mathcal{R}_{s,g}$ on $\mathbb{F}_q[t]^\times$  by
$$ a \equiv b \text{ mod } \mathcal{R}_{s,g} \Leftrightarrow g \text{ divides } a-b \text{ and }  \left|\frac{a}{t^{\deg a}} - \frac{b}{t^{\deg b}}\right|<\frac{1}{q^{s}}$$
It is easy to check that this is indeed an equivalence relation and that for all $c \in \mathbb{F}_q[t]^\times$,
$$a \equiv b \text{ mod } \mathcal{R}_{s,g} \Rightarrow ac \equiv bc \text{ mod } \mathcal{R}_{s,g}$$
so we can define the quotient monoid $ \mathbb{F}_q[t]^\times \slash \mathcal{R}_{s,g}$. Hayes showed that an element of $\mathbb{F}_q[t]$ is invertible modulo $\mathcal{R}_{s,g}$ if and only if it is coprime to $g$ and that the units of this quotient monoid form an abelian group of order $q^s \phi(g)$ which we denote $\mathcal{R}_{s,g}^* = \left(\mathbb{F}_q[t]^\times \slash \mathcal{R}_{s,g} \right)^\times.$ Given a character (group homomorphism) $\chi : \mathcal{R}_{s,g}^* \rightarrow \mathbb{C}$ we can lift this to a character of $\mathbb{F}_q[t]^\times$ by setting $\chi(f)=0$ if $f$ is not invertible modulo $\mathcal{R}_{s,g}$. Associated to each such character is the $L$-function $L(u, \chi)$ defined for $u \in \mathbb{C}$ with $|u|<1/q$ by
$$L(u, \chi) = \sum_{ f \in \mathbb{F}_q[t]^\times } \chi(f) u^{\deg f } = \prod_{ \omega }(1-\chi(\omega)u^{\deg \omega})^{-1}$$
where the product is over all monic irreducibles. When $\chi$ is a non-trivial character it can be shown that $L(u, \chi)$ is a polynomial which factorises as
$$L(u,\chi) =  \prod_{i = 1}^{d(\chi)} (1-\alpha_i(\chi)u)$$ for some $d(\chi) \leq s + \deg g -1 $ and each  $\alpha_i(\chi)$ satisfies $|\alpha_i(\chi)| = 1$ or $\sqrt{q}$. This follows from Weil's Riemann Hypothesis and appears to have been first proved by Rhin in~\cite{Rhin}. 

When $\chi = \chi_0$ is the trivial character we have
$$L(u , \chi_0) =  \sum_{\substack{ f \in \mathbb{F}_q[t]^\times \\(f,g)=1 }}u^{\deg f} =\sum_{ f \in \mathbb{F}_q[t]^\times }u^{\deg f}\prod_{\omega | g }(1-u^{\deg \omega}) = \frac{1}{1-qu}\prod_{\omega | g }(1-u^{\deg \omega}).$$

\begin{lemma}\label{mobchar}
Let $\chi$ be a character modulo $\mathcal{R}^*_{s,g}$ and $\deg g \leq n/2$. Then
$$\left|\sum_{ \deg f = n } \mu(f)\chi(f) \right | \leq
\begin{cases}
{{n+s+\deg g -2}\choose{s+ \deg g -2}} q^{n/2} \:\:\: &\text{  if  } \chi \neq \chi_0 \\
\binom{n+r-1}{r-1}(q+1) &\text{ if } \chi = \chi_0
\end{cases}
$$
where $ r $ is the number of distinct irreducible divisors of $g$.
\end{lemma}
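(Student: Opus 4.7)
The plan is to exploit the identity
$$\sum_{f \in \mathbb{F}_q[t]^\times} \mu(f)\chi(f) u^{\deg f} = \frac{1}{L(u,\chi)},$$
which holds because $\chi$, extended by zero on polynomials not coprime to $g$, is completely multiplicative: the Euler product for $L(u,\chi)$ gives $1/L(u,\chi) = \prod_\omega (1-\chi(\omega)u^{\deg \omega})$, and expanding this product term by term reproduces the M\"obius series on the left. The lemma thus reduces to extracting and bounding the coefficient of $u^n$ in $1/L(u,\chi)$, and the two cases of the lemma correspond to the two descriptions of $L(u,\chi)$ recorded in the excerpt.

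For $\chi \neq \chi_0$, I would use the factorization $L(u,\chi)=\prod_{i=1}^{d(\chi)}(1-\alpha_i u)$ with $d(\chi) \leq s + \deg g - 1$ and $|\alpha_i| \leq \sqrt{q}$ to write
$$\frac{1}{L(u,\chi)} = \prod_{i=1}^{d(\chi)} \sum_{k \geq 0} \alpha_i^k u^k,$$
so that the coefficient of $u^n$ equals $\sum_{k_1+\cdots+k_{d(\chi)}=n}\alpha_1^{k_1}\cdots \alpha_{d(\chi)}^{k_{d(\chi)}}$. The triangle inequality, combined with $|\alpha_i|\leq q^{1/2}$, bounds each summand by $q^{n/2}$; the number of summands is the number of non-negative compositions of $n$ into $d(\chi)$ parts, namely $\binom{n+d(\chi)-1}{d(\chi)-1}$. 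Since this binomial coefficient is monotone increasing in $d(\chi)$ and $d(\chi) \leq s + \deg g - 1$, the claimed bound follows.

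For $\chi = \chi_0$, I would instead invert the explicit formula in the excerpt to obtain
$$\frac{1}{L(u,\chi_0)} = (1-qu)\prod_{\omega \mid g}(1-u^{\deg \omega})^{-1}.$$
Writing $\omega_1,\ldots,\omega_r$ for the distinct irreducible divisors of $g$ and $c_n$ for the coefficient of $u^n$ in $\prod_i(1-u^{\deg \omega_i})^{-1}$, I observe that each $(1-u^{\deg \omega_i})^{-1}$ has $0/1$ coefficients pointwise dominated by those of $(1-u)^{-1}$; since these coefficients are non-negative, the domination is preserved by multiplication, giving $c_n \leq [u^n](1-u)^{-r}=\binom{n+r-1}{r-1}$. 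The coefficient of $u^n$ in $(1-qu)\sum_n c_n u^n$ is then $c_n - qc_{n-1}$, whose absolute value is at most $\binom{n+r-1}{r-1} + q\binom{n+r-2}{r-1} \leq (q+1)\binom{n+r-1}{r-1}$, as required. I do not expect any serious obstacles, since once the identity $\sum\mu\chi = 1/L$ and the Weil--Rhin bounds on the $\alpha_i$ are in hand everything is routine generating-function bookkeeping; the only step deserving a moment of care is the coefficient-wise domination argument in the trivial-character case.
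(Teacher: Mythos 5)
Your proposal is correct and follows essentially the same route as the paper: both cases are handled by reading off the coefficient of $u^n$ in $1/L(u,\chi)$, using the Weil--Rhin factorization with $|\alpha_i|\leq q^{1/2}$ and a count of compositions for $\chi\neq\chi_0$, and the explicit formula $(1-qu)\prod_{\omega\mid g}(1-u^{\deg\omega})^{-1}$ with the same $(q+1)\binom{n+r-1}{r-1}$ bound for $\chi_0$. Your coefficient-wise domination step merely makes explicit what the paper does when it passes from $\sum a_i\deg\omega_i=n$ to $\sum b_i=n$.
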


\begin{rmk}
The bound $\chi_0$ is smaller than the one for $\chi \neq \chi_0$ when $n \geq 3$ because $\deg g$ is an upper bound for $r$ and for $n \geq 3$
$$(q+1)\binom{n+ \deg g - 1}{n} \leq \binom{n+ \deg g - 2}{n} q^{n/2}.$$
\end{rmk}

\begin{proof}
Suppose first that $\chi \neq \chi_0$. Then
\begin{align*}
\sum_{ f } \chi(f) \mu(f) u^{\deg f} = L(u,\chi)^{-1} = \prod_{i = 1}^{d(\chi)}(1- \alpha_i(\chi)u)^{-1}
= \sum_{n \geq 0} \left( \sum_{ \substack{r_1 + \cdots r_{d(\chi)} = n \\ 0 \leq r_i \leq n    }} \prod_{i = 1}^{d(\chi)} \alpha_i(\chi)^{r_i} \right)u^n.
\end{align*}
Comparing coefficients and using the triangle inequality we get
\begin{align*}
\left| \sum_{ \deg f = n } \chi(f) \mu(f) \right| = \left| \sum_{ \substack{ r_1 + \cdots + r_{d(\chi)} = n \\ 0 \leq r_i \leq n    } } \prod_{i = 1}^{d(\chi)} \alpha_i(\chi)^{r_i} \right| &\leq {{n + d(\chi) - 1}\choose{d(\chi) -1 }} q^{n/2} \\
&\leq {{n+s+\deg g -2}\choose{s+ \deg g -2}} q^{n/2}.
\end{align*}
When $\chi= \chi_0$ is the principal character
$$L(u,\chi_0)^{-1} = (1-qu)\prod_{\omega|g}(1+u^{\deg \omega} + u^{2 \deg \omega} + \cdots ).$$
If we write $\omega_1, \omega_2, \ldots, \omega_r$ for the distinct irreducible divisors of $g$ then we get, by equating coefficients again,
\begin{align*}
\left|\sum_{\deg f = n}\chi_0(f)\mu(f) \right| &\leq \sum_{ \substack{ a_i \in \mathbb{Z}_{\geq 0}\\ \sum_{1\leq i \leq r}a_i \deg \omega_i = n }}1 + q\sum_{ \substack{ a_i \in \mathbb{Z}_{\geq 0}\\ \sum_{1\leq i \leq r}a_i \deg \omega_i = n-1 }}1 \\
&\leq (q+1)\sum_{ \substack{ b_i \in \mathbb{Z}_{\geq 0}\\ \sum_{1\leq i \leq r}b_i = n }}1 \\
&= (q+1)\binom{n+r-1}{r-1}.
\end{align*}
\end{proof}

\begin{lemma}\label{decomp}
For each $\theta \in \mathbb{T}$ there exist unique coprime polynomials $a, g \in \mathbb{F}_q[t]$ with $g$ monic and $\deg a < \deg g \leq n/2$ such that
$$ \left| \theta - \frac{a}{g} \right| < \frac{1}{q^{\frac{n}{2} + \deg g}}.$$
\end{lemma}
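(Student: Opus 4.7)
My plan is to adapt the classical Dirichlet approximation theorem via pigeonhole on low-degree polynomials, with uniqueness coming from the ultrametric inequality. The argument is essentially standard, and the main bit of bookkeeping will be handling both parities of $n$ via the floor $\lfloor n/2 \rfloor$. To begin, I set $M = \lfloor n/2 \rfloor$ and consider the $q^{M+1}$ polynomials $b \in \mathbb{F}_q[t]$ with $\deg b \leq M$. For each such $b$, I uniquely write $b\theta = P_b + \{b\theta\}$ with $P_b \in \mathbb{F}_q[t]$ the polynomial part and $\{b\theta\} \in \mathbb{T}$. I will partition $\mathbb{T}$ into the $q^M$ additive cosets of $\{x \in \mathbb{T} : |x| \leq q^{-M-1}\}$, which are exactly the level sets of the map sending $x$ to its leading $M$ Laurent coefficients $(x_{-1}, \ldots, x_{-M})$. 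By pigeonhole, two distinct polynomials $b_1, b_2$ have $\{b_1\theta\}$ and $\{b_2\theta\}$ in the same coset; then $g' := b_1 - b_2$ and $a' := P_{b_1} - P_{b_2}$ satisfy $g' \neq 0$, $\deg g' \leq M$, and $|g'\theta - a'| \leq q^{-M-1}$. Since $M + 1 > n/2$ for both parities of $n$, this translates to $|\theta - a'/g'| < q^{-n/2-\deg g'}$.

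To obtain the required pair $(a, g)$, I divide $a'$ and $g'$ by their greatest common divisor and then scale by the inverse of the leading coefficient of the resulting denominator. This produces coprime $a, g$ with $g$ monic, $a/g = a'/g'$, and $\deg g \leq \deg g' \leq n/2$. The condition $\deg a < \deg g$ is automatic: otherwise $|a/g| \geq 1 > |\theta|$, and the non-archimedean triangle inequality would give $|\theta - a/g| = |a/g| \geq 1$, contradicting the approximation bound.

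For uniqueness, suppose $(a_1, g_1)$ and $(a_2, g_2)$ both satisfy the conclusion. The ultrametric inequality yields
$$|a_1 g_2 - a_2 g_1| = |g_1 g_2| \cdot |a_1/g_1 - a_2/g_2| < q^{\deg g_1 + \deg g_2 - n/2 - \min(\deg g_1, \deg g_2)} = q^{\max(\deg g_1, \deg g_2) - n/2} \leq 1.$$
Since $a_1 g_2 - a_2 g_1 \in \mathbb{F}_q[t]$ has norm strictly less than $1$, it must vanish, so $a_1/g_1 = a_2/g_2$ as rational functions; the coprimality and monicity conditions then force $(a_1, g_1) = (a_2, g_2)$.
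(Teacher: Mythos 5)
Your proof is correct and complete. Note, however, that the paper does not actually prove this lemma; it simply cites Lemma 3 of Pollack's paper \emph{Irreducible polynomials with several prescribed coefficients}, so there is no in-paper argument to compare against. What you have written is the standard function-field Dirichlet approximation argument: pigeonhole on the $q^{M+1}$ polynomials of degree at most $M=\lfloor n/2\rfloor$ against the $q^{M}$ cosets of $\{x\in\mathbb{T}: |x|\leq q^{-M-1}\}$, followed by reduction to lowest terms, with $\deg a<\deg g$ forced by the ultrametric inequality since $|\theta|<1$, and uniqueness obtained by showing $a_1g_2-a_2g_1$ is a polynomial of norm strictly less than $1$, hence zero. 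All the estimates check out: $M+1>n/2$ for both parities gives the required strict inequality, passing from $g'$ to its reduced monic form only strengthens the bound because $\deg g\leq\deg g'$, and in the uniqueness step $\max(\deg g_1,\deg g_2)\leq n/2$ is exactly what makes the exponent nonpositive. Your write-up supplies a self-contained proof where the paper offers only a reference, which is a strictly more useful contribution; the only (harmless) edge cases worth a passing remark are $a'=0$, which reduces to $(a,g)=(0,1)$, and the fact that $b=0$ is included among the $q^{M+1}$ pigeons.
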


\begin{proof}
See Lemma 3 from~\cite{Pol}.
\end{proof}

\begin{lemma}\label{lem}
Let $\theta \in \mathbb{T}$ and let $a, g$ be the unique polynomials defined as in Lemma 2 with respect to $\theta$ and $n$. Set $ s = n - [\frac{n}{2}] - \deg g$. For any $f_1,f_2 \in \mathbb{F}_q[t]^\times$ of degree $n$ such that $f_1 \equiv f_2 \text{ mod } \mathcal{R}_{s,g}$ we have
$$\textbf{e}_q(f_1 \theta) = \textbf{e}_q(f_2 \theta) .$$
\end{lemma}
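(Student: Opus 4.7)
By additivity of $\textbf{e}_q$ (which is a group homomorphism, since $\psi$ is a character of $\mathbb{F}_q$ and the $(-1)$-coefficient is additive on $\mathbb{F}_q((1/t))$), the claim reduces to showing $\textbf{e}_q(h\theta) = 1$ for $h := f_1 - f_2$. I would use Lemma~\ref{decomp} to write $\theta = a/g + \eta$ with $|\eta| < q^{-n/2 - \deg g}$, so that $h\theta = ha/g + h\eta$, and then argue separately that each summand has vanishing coefficient at $t^{-1}$.

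For the first summand, the divisibility half of the congruence $f_1 \equiv f_2$ mod $\mathcal{R}_{s,g}$ says $g \mid h$, so $ha/g$ is a genuine polynomial in $\mathbb{F}_q[t]$; its Laurent expansion at infinity has no negative-degree terms, so $(ha/g)_{-1} = 0$ and $\textbf{e}_q(ha/g) = 1$. For the second summand, the size half of the congruence, $|f_1/t^n - f_2/t^n| < q^{-s}$, reads $|h| < q^{n-s}$; with $s = n - [n/2] - \deg g$ this gives $\deg h \leq [n/2] + \deg g - 1$. Multiplied against $|\eta| < q^{-n/2 - \deg g}$, one expects $|h\eta|$ to be roughly $q^{-1}$, which a priori is not quite enough to kill the $t^{-1}$-coefficient.

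The one delicate point, which is the main thing to get right, is sharpening that last estimate: the norm on $\mathbb{F}_q((1/t))$ takes values only in $q^{\mathbb{Z}} \cup \{0\}$, so the strict inequality on $|\eta|$ can be improved by a factor of $q$. Handling the parity of $n$ separately (the exponent $-n/2 - \deg g$ is an integer when $n$ is even and a half-integer when $n$ is odd, so the nearest admissible value of $|\eta|$ differs in the two cases), one obtains $|h\eta| \leq q^{-2}$ in both cases. This forces $(h\eta)_{-1} = 0$, hence $\textbf{e}_q(h\eta) = 1$; combining with the first summand gives $\textbf{e}_q(h\theta) = 1$ as required. Everything beyond this parity check is a direct unwinding of the definitions of $\mathcal{R}_{s,g}$, $\textbf{e}_q$, and the choice of $s$.
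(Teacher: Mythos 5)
Your argument is correct. The paper itself gives no proof of this lemma---it simply cites Lemma 5.2 of Hayes~\cite{Hay2}---so your write-up supplies a self-contained verification of what the paper outsources, and it follows the expected lines. The two halves of the congruence are used exactly as they should be: $g \mid f_1-f_2$ kills the $a/g$ part of $\theta$ because $(f_1-f_2)a/g$ is a polynomial, and the metric half gives $|f_1-f_2|\leq q^{[\frac{n}{2}]+\deg g-1}$, which against $|\theta-a/g|<q^{-\frac{n}{2}-\deg g}$ yields $|(f_1-f_2)(\theta-a/g)|\leq q^{-2}$ once one uses that the norm takes values in $q^{\mathbb{Z}}\cup\{0\}$. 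You correctly identified that this integrality sharpening is the one non-automatic step (the naive bound only gives $<q^{-1}\cdot q^{[\frac{n}{2}]-\frac{n}{2}}$, which is insufficient to annihilate the $t^{-1}$ coefficient), and your parity split handles it cleanly; note that one can also avoid the case split by observing directly that $q^{[\frac{n}{2}]-\frac{n}{2}-1}\leq q^{-1}$ with strict inequality in the exponent forcing $|h\eta|\leq q^{-2}$ in both parities. The additivity of $\textbf{e}_q$ and the reduction to $\textbf{e}_q(h\theta)=1$ are likewise fine.
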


\begin{proof}
See Lemma 5.2 from~\cite{Hay2}.
\end{proof}

\begin{lemma}\label{better}
Suppose $g \in \mathbb{F}_q[t]$ is square-free. Then
$$\sum_{d | g} \frac{1}{q^{\deg d}} \leq (1+\frac{\log (\deg g)}{\log q})e.$$
\end{lemma}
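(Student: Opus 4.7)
The plan is to convert the divisor sum into an Euler-type product via square-freeness, split it at the threshold $L := \lfloor \log_q \deg g \rfloor$, and estimate the two pieces separately.

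Because $g$ is square-free, its divisors correspond bijectively to subsets of its irreducible factors, so
$$\sum_{d|g} q^{-\deg d} = \prod_{\omega | g}(1+q^{-\deg \omega}).$$
Write $N = \deg g$ and $L = \lfloor \log_q N \rfloor$, so that $q^L \leq N < q^{L+1}$. I would split this product according to whether $\deg \omega \leq L$ or $\deg \omega > L$.

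For the low-degree piece, I would enlarge the product to include every monic irreducible of degree $\leq L$ and then apply $\log(1+x) \leq x$ together with the standard bound $\pi_q(k) \leq q^k/k$ (which follows from counting the elements of $\mathbb{F}_{q^k}$):
$$\prod_{\omega|g,\ \deg\omega\leq L}(1+q^{-\deg\omega}) \leq \exp\!\left(\sum_{k=1}^L \tfrac{\pi_q(k)}{q^k}\right) \leq \exp\!\left(\sum_{k=1}^L \tfrac{1}{k}\right) \leq \exp(1+\log L) = eL.$$
For the high-degree piece, let $r$ be the number of $\omega|g$ with $\deg\omega > L$; since $r(L+1) \leq \sum_{\omega|g}\deg \omega \leq N$, we have $r \leq N/(L+1)$, and combining this with $q^{-(L+1)} < 1/N$ (from $q^{L+1} > N$) yields
$$\prod_{\omega|g,\ \deg\omega > L}(1+q^{-\deg\omega}) \leq \exp\!\left(rq^{-(L+1)}\right) \leq \exp\!\left(\tfrac{1}{L+1}\right).$$

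Multiplying the two estimates gives $eL \cdot \exp(1/(L+1))$, and the step I expect to be most delicate, the one needed to land on the clean constant $e$ in the statement rather than something like $e^2$, is the elementary inequality $\exp(1/(L+1)) \leq 1 + 1/L$ for $L \geq 1$. This is equivalent to $\log(1+1/L) \geq 1/(L+1)$ and follows from $\log(1+x) \geq x/(1+x)$; it collapses the estimate to $e(L+1) \leq e(1+\log_q N)$, which is the asserted bound. The edge case $L=0$ (i.e., $1 \leq N < q$) must be handled directly since the small-piece bound $eL$ degenerates: here the small product is empty, so the total is at most $\exp(N/q) \leq e \leq e(1+\log_q N)$.
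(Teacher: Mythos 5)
Your proof is correct, but it organizes the estimate differently from the paper. Both arguments begin with the same Euler-product step $\sum_{d\mid g}q^{-\deg d}=\prod_{\omega\mid g}(1+q^{-\deg\omega})$ and both invoke $\pi_q(k)\leq q^k/k$, but they diverge in how they control the contribution of high-degree irreducible factors. The paper uses a rearrangement device: listing all monic irreducibles $P_1,P_2,\dots$ by increasing degree, it observes that replacing the actual irreducible divisors of $g$ by the smallest possible irreducibles only increases the product, so the whole sum is dominated by $\prod_{\deg P\leq N}(1+q^{-\deg P})$ for a suitable cutoff $N$; it then needs the \emph{lower} bound $\sum_{k\mid N-1}k\,\pi_q(k)=q^{N-1}$ from the prime polynomial theorem to convert $N$ into $1+\log_q(\deg g)$. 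You instead split the product at the threshold $L=\lfloor\log_q\deg g\rfloor$, absorb the low-degree part into the full product over all irreducibles of degree at most $L$ (yielding $eL$ via the harmonic sum), and control the high-degree part by the crude count $r\leq \deg g/(L+1)$ together with $q^{-(L+1)}<1/\deg g$, finishing with the elementary inequality $\exp(1/(L+1))\leq 1+1/L$ to recover the constant $e$ exactly. Your route avoids the prime polynomial theorem entirely (only the trivial upper bound on $\pi_q(k)$ is used) and is the more standard ``split at a degree threshold'' argument for divisor-type sums; the paper's rearrangement is slightly slicker in that the extremal configuration (all factors as small as possible) is made explicit in one step, at the cost of importing the exact identity $q^n=\sum_{d\mid n}d\,\pi_q(d)$. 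Your handling of the degenerate case $L=0$ is a necessary and correctly executed addition that the paper's formulation sidesteps by defining its cutoff $N$ implicitly. Both arrive at the same bound $(1+\log_q(\deg g))\,e$.
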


\begin{proof}
Order the monic irreducibles $\omega_1, \omega_2, \ldots, \omega_r$ dividing $g$ and the monic irreducibles $P_1, \ldots$ in $\mathbb{F}_q[t]$ in order of degree (and those of the same degree arbitrarily). Let $\pi(k)$ be the number of monic irreducibles of degree $k$ and define $N$ by $\sum_{\deg P \leq N-1}\deg P < \deg g \leq \sum_{\deg P \leq N}\deg P.$ Then $g$ has at most $\sum_{1 \leq k \leq N}\pi(N)$ irreducible factors. Therefore, since $\deg P_i \leq \deg \omega_i$, we have
$$\sum_{d|g}\frac{1}{q^{ \deg d }} \leq \prod_{\omega | g}\left(1 + \frac{1}{q^{\deg \omega}} \right) \leq \prod_{\deg P \leq N }\left(1+\frac{1}{q^{\deg P}}\right)=\prod_{1\leq k \leq N}\left(1+\frac{1}{q^k}\right)^{\pi(k)}.$$
Using $\pi(k) \leq \frac{q^k}{k}$ this is bounded by
$$\prod_{1\leq k \leq N}\left(1+\frac{1}{q^k}\right)^{\frac{q^k}{k}} \leq \prod_{1\leq k \leq N}e^{\frac{1}{k}} \leq e^{1+\log N} =  Ne.$$
Now we bound $N$ in terms of $\deg g$ as follows
$$\deg g > \sum_{\deg P \leq N-1}\deg p = \sum_{1 \leq k \leq N-1}\pi(k)k \geq \sum_{k | N -1} \pi(k) k = q^{N-1}$$
by the prime number theorem in $\mathbb{F}_q[t].$ This gives $N \leq 1+\frac{\log (\deg g)}{\log q}$ which completes the proof of the Lemma.
\end{proof}

\section{Proof of Theorem~\ref{thm}}

Let $ \theta \in \mathbb{T}$ and choose $g$ and $s$ as in Lemma~\ref{lem}. We start by giving an explicit description of a set a representatives for the equivalence relation $\mathcal{R}_{s,g}$. It is not hard to show that
$$\mathcal{S}_{s,g} = \{t^{[\frac{n}{2}]} g b_1 + b_2 \: | \: \deg b_1 = s, b_1 \text{ monic}, \deg b_2 < \deg g \}$$
is such a set. Furthermore, $$\mathcal{S}^*_{s,g} = \{t^{[\frac{n}{2}]} g b_1 + b_2 \: | \: \deg b_1 = s, b_1 \text{ monic}, \deg b_2 < \deg g, (b_2,g)=1 \}$$
defines a set of reduced representatives modulo $\mathcal{R}_{s,g}$. See~\cite{Hay2} Lemma 7.1 for details.

Then by Lemma~\ref{lem} and the orthogonality of characters modulo $\mathcal{R}_{s,g}^*$ we can write

\begin{align*}
\sum_{ \deg f = n} \mu(f) \textbf{e}_q(f \theta) &= \sum_{ b \in \mathcal{S}_{s,g} } \sum_{ \substack{ \deg f = n \\ f \equiv b \text{ mod } \mathcal{R}_{s,g} } } \mu(f) \textbf{e}_q(f \theta) \\
&= \sum_{d | g}\sum_{ \substack{ b \in \mathcal{S}_{s,g} \\ (g,b)=d} } \textbf{e}_q(b \theta) \sum_{ \substack{ \deg f = n \\ f \equiv b \text{ mod } \mathcal{R}_{s,g} } } \mu(f) \\
&= \sum_{d | g}\sum_{ \substack{ b \in \mathcal{S}_{s,g/d} \\ (g/d,b)=1} } \textbf{e}_q(bd \theta) \sum_{ \substack{ \deg f = n - \deg d \\ f \equiv b \text{ mod } \mathcal{R}_{s,g/d} } } \mu(fd) \\
&= \sum_{d | g}\sum_{  b \in \mathcal{S}^*_{s,g/d} } \textbf{e}_q(b d \theta)\sum_{ \deg f = n -\deg d} \frac{1}{q^s \phi(g/d)} \sum_{ \chi \text{ mod } \mathcal{R}^*_{s,g/d}} \overline{\chi}(b) \chi(f) \mu(fd).
\end{align*}

Notice that $\mu(fd) = \mu(f)\mu(d) \chi_d(f)$ where $\chi_d(f)$ is the trivial character modulo $\mathcal{R}_{s,d}^*$. We can therefore rewrite the above as
$$
=\sum_{d | g}\frac{\mu(d)}{q^s \phi(g/d)}\sum_{ \chi \text{ mod } \mathcal{R}^*_{s,g/d}}\left( \sum_{  b \in \mathcal{S}^*_{s,g/d} } \textbf{e}_q(b d \theta)\overline{\chi}(b)\right ) \left( \sum_{ \deg f = n -\deg d} \mu(f) \chi \chi_d(f) \right).
$$

Now $\chi$ is a character modulo $\mathcal{R}^*_{s,g/d}$ and $\chi_d$ is a character modulo $\mathcal{R}^*_{s,d}$. Therefore, $\chi\chi_d$ is a character modulo $\mathcal{R}^*_{s,g}$, and so using the triangle inequality and Lemma~\ref{mobchar} we can bound this in absolute value by
\begin{align*}
q^{n/2} \sum_{ \substack{ d | g \\ g \text{ square-free } }}\frac{1}{q^{s+\deg d/2} \phi(g/d)} {{n-\deg d+s+\deg g -2}\choose{s+ \deg g -2}}\sum_{ \chi \text{ mod } \mathcal{R}^*_{s,g/d}} \left| \sum_{ b \in \mathcal{S}_{s,g/d} } \textbf{e}_q(bd \theta) \overline{\chi}(b) \right|
\end{align*}
We bound the Gauss sum over $\chi$ mod $\mathcal{R}^*_{s,g/d}$ in the standard way using the Cauchy–Schwarz inequality and Parseval's identity as follows
\begin{align*}
\sum_{ \chi \text{ mod } \mathcal{R}^*_{s,g/d}} \left| \sum_{ b \in \mathcal{S}_{s,g/d} } \textbf{e}_q(bd \theta) \overline{\chi}(b) \right| & \leq \left(\sum_{\chi \text{ mod } \mathcal{R}^*_{s,g/d}}1  \sum_{ \chi \text{ mod } \mathcal{R}^*_{s,g/d} } \left|\sum_{b \in \mathcal{S}_{s,g/d}} \textbf{e}_q(bd \theta) \overline{\chi}(b) \right|^2  \right)^{1/2} \\
&= \left(  q^s \phi(g/d) \sum_{b_1, b_2 \in \mathcal{S}_{s,g/d} } \textbf{e}_q(d(b_1-b_2)\theta) \sum_{\chi \text { mod } \mathcal{R}^*_{s,g} }  \overline{\chi}(b_1) \chi(b_2) \right)^{1/2} \\
&= \left(  (q^s \phi(g/d))^2 \sum_{b_1 = b_2 \in \mathcal{S}^*_{s,g/d} } \textbf{e}_q((b_1-b_2)\theta) \right)^{1/2} \\
&=(q^s \phi(g/d))^{3/2}.
\end{align*}
Recall that $s+ \deg g = n-[\frac{n}{2}] \geq n/2$ so that
$${{n - \deg d+s+\deg g -2}\choose{s+ \deg g -2}} \leq {{2n - [\frac{n}{2}] -2}\choose{n - [\frac{n}{2}] -2}}.$$
We can bound this binomial coefficient using the fact that for all positive integers $k$,
$$\sqrt{2\pi} k^{k+\frac{1}{2}}e^{-k + \frac{1}{12k+1}}<k!<\sqrt{2\pi} k^{k+\frac{1}{2}}e^{-k + \frac{1}{12k}}.$$
This precise form of Stirling's formula is due to Robbins~\cite{Rob}. It follows that if $k=[\frac{n}{2}]$ then
$${{2n - [\frac{n}{2}] -2}\choose{n - [\frac{n}{2}] -2}}<{{3k}\choose{k}}<\frac{1}{\sqrt{2\pi}}e^{\frac{1}{36k}-\frac{1}{12k+1}-\frac{1}{24k+1}} \frac{(3k)^{3k+\frac{1}{2}}}{k^{k+\frac{1}{2}}(2k)^{2k+\frac{1}{2}}}<\frac{1}{\sqrt{4\pi k/3}}\left(\tfrac{3\sqrt{3}}{2}\right)^{2k}.$$
Putting it all together with $\phi(g/d) \leq q^{\deg g - \deg d}$ and Lemma~\ref{better} we get
\begin{align*}
\left| \sum_{ \deg f = n} \mu(f) \textbf{e}_q(f \theta) \right| &\leq q^{n/2}\frac{1}{\sqrt{2\pi (n-1)/3}}\left(\tfrac{3\sqrt{3}}{2}\right)^{n}\sum_{d | g} \frac{(q^s \phi(g/d))^{1/2}}{q^{\deg d /2}} \\
&\leq q^{n-\frac{1}{2}[\frac{n}{2}]} \frac{(1+\tfrac{\log n}{ \log q })e}{\sqrt{2\pi (n-1)/3}}\left(\tfrac{3\sqrt{3}}{2}\right)^{n}
\end{align*}
and Theorem~\ref{thm} easily follows after a short numerical calculation.

\end{document}